\documentclass[11pt]{amsart}
\usepackage[cm]{fullpage}
\addtolength{\oddsidemargin}{2cm}
\addtolength{\evensidemargin}{2cm}
\addtolength{\textwidth}{-4cm} \addtolength{\topmargin}{1.5cm}
\addtolength{\textheight}{-3.5cm}

\usepackage{amssymb,amsmath,amsthm,amscd,mathrsfs,graphicx}
\usepackage[cmtip,all]{xy}
\usepackage{pb-diagram}
\usepackage{tikz}

\numberwithin{equation}{section}
\newtheorem{teo}{Theorem}[section]

\newtheorem{lem}[teo]{Lemma}

\newtheorem{theorem}{Theorem}[section]
\newtheorem{proposition}{Proposition}[section]
\newtheorem{lemma}{Lemma}[section]

\newcommand{\ov}[1]{\overline{#1}}

\newcommand{\ve}{\varepsilon}

\theoremstyle{definition}

\theoremstyle{remark}
\newtheorem{remark}[teo]{Remark}

\begin{document}
\bibliographystyle{amsplain}

\title{Counterexamples to quasiconcavity \\ for the heat equation}

\author[A. Chau]{Albert Chau}
\address{Department of Mathematics, The University of British Columbia, 1984 Mathematics Road, Vancouver, B.C.,  Canada V6T 1Z2.  Email: chau@math.ubc.ca. } 
\author[B. Weinkove]{Ben Weinkove}

\address{Department of Mathematics, Northwestern University, 2033 Sheridan Road, Evanston, IL 60208, USA.  Email: weinkove@math.northwestern.edu.}

\thanks{Research supported in part by  NSERC grant $\#$327637-06 and NSF grants DMS-1406164 and DMS-1709544}

\maketitle

\begin{abstract}
We construct solutions to the heat equation on convex rings showing that quasiconcavity may not be preserved along the flow,
even for smooth and subharmonic initial data.
\end{abstract}

\section{Introduction}

Let $\Omega_0$ and $\Omega_1$ be convex  open sets with smooth boundary in $\mathbb{R}^n$ with $\ov{\Omega}_1 \subset \Omega_0$.  Assume that $\Omega_1$ contains the origin.  Denote by  $\Omega = \Omega_0 \setminus \ov{\Omega}_1$ the open convex ring.   We say that a function $u(x)$ on $\ov{\Omega}$ is \emph{quasiconcave} if the sets 
$$\{ x \in \ov{\Omega} \ | \  u(x) \ge c \} \cup \Omega_1$$
 are convex subsets of $\mathbb{R}^n$ for every $c\in \mathbb{R}$.  Fix $T$ with $0<T\le \infty$.  A function $u=u(x,t)$ on $\ov{\Omega} \times [0,T)$ is called \emph{space-time quasiconcave} if the sets $$\{ (x,t) \in \ov{\Omega} \times [0, T) \ | \ u(x,t) \ge c \}\cup (\Omega_1 \times [0,T))$$ are convex subsets of $\mathbb{R}^{n+1}$ for every $c \in \mathbb{R}$.  Note that space-time quasiconcavity of $u(x,t)$ implies that $x \mapsto u(x,t)$ is quasiconcave on $\ov{\Omega}$ for each $t$.

It is a classical result that if $u$ is a harmonic function on $\Omega$ satisfying the Dirichlet boundary conditions
\begin{equation} \label{dir}
u|_{\partial \Omega_0} =0, \quad u|_{\partial \Omega_1} =1
\end{equation}
 then $u$ is quasiconcave \cite{A, G, L}.  This result has been extended to solutions $u$ of more general elliptic PDEs, where there is a general principle that  convexity properties of $\Omega_0$ and $\Omega_1$ imply convexity of the superlevel sets of $u$.  These results are proved via ``macroscopic'' approaches involving functions of two points which could be far apart, or ``microscopic'' approaches using functions of the principal curvatures of the level sets together with constant rank theorems.  See for example \cite{ALL, BG,  BGMX, BLS, BL, CF, CGM, CS, CMY, Ka, Ko2, KL, RR, S, SWYY, SW} and the references therein.  On the other hand, convexity properties fail for solutions to some elliptic PDEs \cite{HNS, Wa}.

There has been considerable interest in \emph{parabolic} versions of these classical results.   Parabolic constant rank theorems in rather general contexts have been established by Hu-Ma \cite{HM}, Chen-Hu \cite{CH} and Chen-Shi \cite{ChS}.   An older result of Borell \cite{Bo} assumes that the initial data is 
 identically zero and shows that the solution $u(x,t)$ to the heat equation with boundary conditions (\ref{dir}) is space-time quasiconcave.  This result has been extended to more general parabolic equations by Ishige-Salani \cite{Ish2, Ish3}.  
  However, the  assumption of identically vanishing initial data is rather restrictive.   This begs the question: what assumption on the initial data is necessary to ensure space-time quasiconcavity of the solution to the heat equation?  Ishige-Salani \cite{Ish} gave examples to show that quasiconcavity of the initial data is not sufficient.  A natural condition considered in \cite{DK0, DK} is that $u_0$ in addition be subharmonic (with sufficient regularity), namely $\Delta u_0 \ge 0$.  In this paper we provide a counterexample to show that this is still not sufficient to ensure quasiconcavity of the solution to the heat equation.

More precisely, we consider a classical solution $u$ of the following problem:
 \begin{equation} \label{heat}
 \left\{ \begin{array}{ll} \partial u/\partial t = {}  \Delta u, \quad & \textrm{on } \Omega \times (0, T) \\
 u(x,0) = u_0(x), \quad & x\in \Omega \\
 u(x,t) = 0, \quad &  (x,t) \in \partial \Omega_0 \times [0,T) \\
 u(x,t) = 1, \quad &  (x,t) \in \partial\Omega_1 \times [0,T), \end{array} \right.
 \end{equation}
 for $0<T\le \infty$.  Here $u_0$ is  a smooth function  on $\overline{\Omega}$ which satisfies the conditions
  \begin{equation} \label{ass}
  \begin{split}
  & 
 u_0=1 \  \textrm{on } \partial{\Omega}_1, \ \ u_0 =0 \ \textrm{on } \partial \Omega_0, \ \ x \cdot \nabla u_0(x) \le 0 \ \textrm{on } \Omega \\ 
 & \Delta u_0 \ge 0 \ \textrm{on } \Omega \quad \textrm{but not identically zero.}
 \end{split}
 \end{equation} 
 We call such a function $u_0$ \emph{admissible}.

We consider the following question:  if an admissible $u_0$ is quasiconcave on $\ov{\Omega}$, does it follow that the solution $u(x,t)$ to (\ref{heat}) is space-time quasiconcave on $\ov{\Omega} \times [0,T)$?  If not, is $x\mapsto u(x,t)$ quasiconcave on $\ov{\Omega}$ for each $t>0$?

We construct a counterexample to show that the answer to both of these questions is negative.

\begin{theorem} \label{theorem1} For any $n\ge 2$, let $\Omega_1$ and $\Omega_0$ be balls in $\mathbb{R}^n$ centered at the origin, of radii 1 and 2 respectively, so that $\Omega$ is the annulus $1< r <2$.
There is an admissible quasiconcave function $u_0$ with the following properties:
 \begin{enumerate}
\item[(i)] The solution $u(x,t)$ to (\ref{heat}) is smooth on $\Omega \times (0,\infty)$ and continuous on $\overline{\Omega} \times [0,\infty)$.
\item[(ii)]  There exists  $t_0>0$ such that the function $x \mapsto u(x,t_0)$ fails to be quasiconcave on $\overline{\Omega}$.  
\end{enumerate}
\end{theorem}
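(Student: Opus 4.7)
\emph{Symmetry reduction.} Radial initial data produce a radial solution whose superlevel sets are spherical shells and hence become balls after union with $\Omega_1$, so a radial $u_0$ is automatically space-time quasiconcave. Any counterexample must therefore be non-radial. I would retain a single reflection symmetry, say under $\tau\colon (x_1, x_2, x_3, \dots, x_n) \mapsto (x_1, -x_2, x_3, \dots, x_n)$, so that $u(\cdot, t)$ is $\tau$-invariant for all $t$. Non-quasiconcavity of $u(\cdot, t_0)$ then follows from the existence of $a \in (1, 2)$ and $b > 0$ satisfying
\[
u\bigl((a, b, 0, \dots, 0), t_0\bigr) > u\bigl((a, 0, 0, \dots, 0), t_0\bigr),
\]
since the midpoint $(a, 0, \dots, 0)$ lies outside $\Omega_1$ (as $a > 1$) and the two reflected points have equal $u$-values by symmetry.

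\emph{Structure of $u_0$.} Let $u^*$ denote the radial harmonic steady state on $\Omega$ with the given Dirichlet data. The admissibility condition $\Delta u_0 \ge 0$, together with the matching boundary values, forces $u_0 \le u^*$: indeed $u^* - u_0$ is superharmonic and vanishes on $\partial \Omega$, hence is nonnegative. I would therefore write $u_0 = u^* - w$, where $w \ge 0$, $w|_{\partial \Omega} = 0$, $\Delta w \le 0$ but not identically zero, and $w$ is $\tau$-invariant. The remaining conditions $x \cdot \nabla u_0 \le 0$ and the quasiconcavity of $u_0$ impose pointwise constraints on $w$; in particular, quasiconcavity limits how peaked $w$ can be, because any localized bump in $w$ creates an ``indent'' in the level sets of $u_0$ that may destroy convexity of the superlevel sets.

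\emph{Dynamic reversal.} Writing $u = u^* - \tilde{w}$, where $\tilde{w}$ solves the heat equation with zero Dirichlet data and initial value $w$, the criterion above becomes
\[
\tilde{w}\bigl((a, 0, \dots), t_0\bigr) - \tilde{w}\bigl((a, b, \dots), t_0\bigr) > u^*\bigl((a, 0, \dots)\bigr) - u^*\bigl((a, b, \dots)\bigr) =: \delta > 0.
\]
The right-hand side is a fixed positive constant; the left-hand side must start at $w((a, 0, \dots)) - w((a, b, \dots)) \le \delta$ (by quasiconcavity of $u_0$), pass through some intermediate value, and return to $0$ as $t \to \infty$. The planned mechanism is that if $a$ is chosen close to $2$ then the off-axis points $(a, \pm b, \dots)$ are substantially closer to the outer Dirichlet boundary $\partial \Omega_0$ than the on-axis point, so the Dirichlet draining there pulls $\tilde{w}((a, \pm b, \dots), t)$ down much faster than $\tilde{w}((a, 0, \dots), t)$; for a suitably chosen $w$ with enough off-axis mass, this disparity will cause the left-hand side to exceed $\delta$ during an intermediate window of times.

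\emph{Main obstacle.} The hardest step is simultaneously satisfying all the admissibility and quasiconcavity conditions on $u_0$ while producing this dynamic reversal: these constraints pull in opposing directions. I expect to make the argument rigorous either by a quantitative separation-of-variables analysis of $\tilde{w}$ on the annulus (spherical harmonics on $S^{n-1}$ combined with Bessel-type radial equations, giving explicit decay rates for each angular mode against the radial relaxation), or by a continuity argument on a one-parameter family of candidate initial data, interpolating between the radial case (where no reversal occurs) and an extreme case (where quasiconcavity already fails at $t = 0$), locating a parameter value at which $u_0$ is still quasiconcave but $u(\cdot, t_0)$ is not.
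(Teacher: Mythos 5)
Your setup is sound: the reduction to a non-radial example, the choice of a reflection symmetry $\tau$ so that the two endpoints $(a,\pm b,0,\ldots,0)$ automatically have equal $u$-values, the decomposition $u_0 = u^* - w$ with $w\ge 0$, $\Delta w\le 0$, $w|_{\partial\Omega}=0$, and the reformulation of the failure of quasiconcavity as $\tilde{w}((a,0,\ldots),t_0)-\tilde{w}((a,b,\ldots),t_0) > \delta$ with the starting constraint $w((a,0,\ldots))-w((a,b,\ldots))\le\delta$ forced by quasiconcavity of $u_0$. But the heart of the theorem is precisely the inequality you call the ``dynamic reversal,'' and you do not prove it; you observe that it is plausible and then defer to two candidate strategies. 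Note in particular that, since $\Delta w\le 0$ propagates (as $\Delta\tilde w$ solves the heat equation with zero boundary data and nonpositive initial data), $\tilde w$ is pointwise nonincreasing in $t$ at \emph{both} points, so the gap must grow from a value $\le\delta$ up past $\delta$ entirely through the off-axis point draining faster; whether the admissibility and quasiconcavity constraints leave enough room for a $w$ that achieves a gap exceeding the \emph{fixed} threshold $\delta$ is exactly the quantitative question that is left open. The proposed spherical-harmonics route is not carried out, and the proposed continuity route is logically incomplete as stated: quasiconcavity of $u_0(\lambda)$ and quasiconcavity of $u(\cdot,t_0;\lambda)$ are both closed conditions in $\lambda$, so interpolating between ``radial, preserved'' and ``non-quasiconcave already at $t=0$'' does not by itself produce a parameter with $u_0$ quasiconcave but $u(\cdot,t_0)$ not; one could a priori have the set of $\lambda$ with $u_0(\lambda)$ quasiconcave strictly contained in the set with $u(\cdot,t_0;\lambda)$ quasiconcave.

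For contrast, the paper sidesteps the eigenfunction/long-time analysis entirely and works at short time. It takes $u_0=(1-\ve)V+\ve W$, where $V$ is an explicit radial bump supported on $\{1\le r\le 5/4\}$ and $W$ is obtained from a radial bump on $\{1\le r\le 3/2\}$ by a linear shear that turns its level sets into non-spherical ellipsoids. The failure of convexity is forced by a purely geometric fact (the union of a ball and a non-spherical ellipsoidal solid is non-convex when neither contains the other) together with a quantitative decay lemma for radial subharmonic functions (Lemma \ref{lemmael}), applied at a point $Z$ lying just outside both the sphere $S_{R^+}$ and the ellipsoid $E_{3/2}$; the parameter $\ve$ is then matched to the small heat-flow output $\alpha(t_0)$ at the chosen time $t_0$. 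The mechanism is opposite in spirit to yours: rather than relying on faster Dirichlet draining near $\partial\Omega_0$ for large $t$, the paper exploits that a concentrated radial source near $\partial\Omega_1$ spreads outward on spheres, outrunning a small ellipsoidal perturbation at a carefully chosen midpoint.
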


This example implies that the statement of \cite[Theorem 3]{DK} (see the discussion in \cite[Section 7]{Ish2}) requires additional hypotheses.

Our construction in Theorem \ref{theorem1} is based on the simple observation that the union of interiors of a sphere and a non-spherical ellipsoid is non-convex unless one is contained in the other.  We use this observation as follows.   We first find a radially symmetric admissible function $V$ which is close to 1 near the boundary of $\Omega_1$ and drops off rapidly to zero.  For every positive time, the level sets of the heat flow solution starting from $V$ will then give a foliation of $\Omega$ by spheres.  We then construct an admissible function $W$ whose level sets are spherical near the boundary of $\Omega_1$ but non-spherical ellipsoids as one goes outwards.  We choose $u_0 = (1-\ve)V + \ve W$, for $\ve>0$ small,  as initial data.  The relatively large radially symmetric heat distribution of $(1-\ve)V$ quickly emanates out and interacts with the ellipsoidal level sets of $\ve W$ to give a non-convex superlevel set after some positive time.  The proof of Theorem \ref{theorem1}, given in Section \ref{section2}, makes this heuristic argument precise.

Note that by necessity our counterexample is not radially symmetric, and must have dimension $n>1$.  If radial symmetry is imposed for $u_0$, which implies quasiconcavity of $x\mapsto u(x,t)$ for each $t$ (since the superlevel sets are balls in $\mathbb{R}^n$) it is natural to ask whether the stronger condition of space-time quasiconcavity follows.  Our next counterexample shows that the answer to this is again negative for any $n\geq 1$.

\pagebreak[3]

\begin{theorem} \label{theorem2}  For any $n\geq 1$, let $\Omega_1$ and $\Omega_0$ be  balls in $\mathbb{R}^n$ of radii $R$ and $R+1$ respectively, for a constant $R>1$.  For $R$ sufficiently large, there is an admissible function $u_0$ on $\overline{\Omega}= \{ R \le r \le R+1\} \subset \mathbb{R}^n$  with the following properties:
 \begin{enumerate}
 \item[(i)] $u_0$ is radially symmetric  and hence if $u(x,t)$ solves (\ref{heat}) then $x \mapsto u(x,t)$ is quasiconcave on $\overline{\Omega}$ for every $t\ge 0$. 
\item[(ii)] $u(x,t)$ is smooth on $\ov{\Omega} \times [0,\infty)$.
\item[(iii)]  $u(x,t)$ is not space-time quasiconcave on $\overline{\Omega} \times [0,T)$ for any $T>0$.
\end{enumerate}
\end{theorem}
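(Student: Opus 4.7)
\emph{Proof plan.} The plan is to use radial symmetry to reduce space-time quasiconcavity to concavity in $t$ of a single scalar function $\rho(t,c)$, the outer radius of the $c$-superlevel set of $u(\cdot,t)$, and then to produce an admissible initial datum modelled on $(R+1-r)^2$ for which a direct computation gives $\rho_{tt}(0,c)>0$ at some $c\in(0,1)$. Failure of concavity on every $[0,T)$ then yields (iii), while (i) and (ii) require only routine (if occasionally delicate) verifications.

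\textbf{Reduction.} By rotational invariance, $u(x,t)=v(r,t)$ satisfies $v_t=v_{rr}+(n-1)v_r/r$ on $(R,R+1)\times(0,\infty)$ with $v(R,t)=1$, $v(R+1,t)=0$. The function $W=v_r$ solves a linear parabolic equation with nonpositive zeroth-order coefficient $-(n-1)/r^2$ and has $W\le 0$ on the parabolic boundary ($u_0'\le 0$ at $t=0$, and by Hopf on the two boundary spheres since $u$ attains its max/min there). The maximum principle then gives $v_r\le 0$ throughout, proving (i). For each $c\in(0,1)$ there is a unique $\rho(t,c)\in(R,R+1)$ with $v(\rho(t,c),t)=c$, and the augmented superlevel set of Section~1 is the ball $\{|x|\le\rho(t,c)\}$ at each $t$. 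A short triangle-inequality argument using rotational symmetry shows that $\{(x,t)\in\ov\Omega\times[0,T):|x|\le\rho(t,c)\}\cup(\Omega_1\times[0,T))$ is convex in $\mathbb{R}^{n+1}$ if and only if $t\mapsto\rho(t,c)$ is concave. Thus (iii) reduces to finding some $c$ with $\rho(\cdot,c)$ not concave on every $[0,T)$.

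\textbf{Key computation.} Take the model $u_0(r)=(R+1-r)^2$, which is strictly convex, monotone decreasing, satisfies the boundary values, and has $\Delta u_0 = 2-2(n-1)(R+1-r)/r \ge 2 - 2(n-1)/R > 0$ for $R>n-1$, so it is admissible. Differentiating $v(\rho(t,c),t)=c$ twice in $t$ and substituting $v_t = v_{rr}+(n-1)v_r/r$ yields, at $r=\rho(t,c)$,
\[
\rho_{tt} \;=\; -\frac{v_{tt}}{v_r} + \frac{2\,v_t\,v_{rt}}{v_r^2} - \frac{v_t^2\,v_{rr}}{v_r^3}.
\]
At $t=0$, $r_0=\rho(0,c)=R+1-\sqrt c$, using $u_0'(r_0)=-2\sqrt c$, $u_0''\equiv 2$, $u_0'''\equiv u_0''''\equiv 0$, along with the derived $v_t|_0=\Delta u_0 = 2+O(1/R)$, $v_{rt}|_0=O(1/R)$ and $v_{tt}|_0=\Delta^2 u_0=O(1/R^2)$, a direct substitution gives
\[
\rho_{tt}(0,c) \;=\; \frac{1}{c^{3/2}} + O\!\left(\frac{1}{R}\right).
\]
Fix $c=1/4$; for $R$ sufficiently large this is strictly positive, so $\rho(\cdot,c)$ is strictly convex at $t=0$ and hence fails concavity on $[0,T)$ for every $T>0$. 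Combined with the reduction, this gives (iii).

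\textbf{Smoothness and main obstacle.} The plain model $(R+1-r)^2$ does not in general satisfy the infinite-order corner compatibility conditions $\Delta^k u_0|_{\p\Omega}=0$ for $k\ge 1$ needed to obtain (ii), namely $u\in C^\infty(\ov\Omega\times[0,\infty))$. One remedies this by modifying $u_0$ only in arbitrarily small neighborhoods of the two boundary spheres. Let $\psi$ denote the harmonic radial solution with the prescribed boundary values, and write $u_0=\psi+\eta$ where $\eta$ is a smooth radial function that (a) coincides with $(R+1-r)^2-\psi$ on a fixed interior neighborhood of $r_0=R+1/2$, (b) vanishes to infinite order at $r\in\{R,R+1\}$, and (c) is subharmonic on $\Omega$ and not identically zero. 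A standard cutoff-interpolation construction, valid for $R$ sufficiently large, produces such an $\eta$ while preserving the monotone-decreasing property of $u_0$. Since $\eta$ agrees with $(R+1-r)^2-\psi$ near $r_0$, the derivatives of $u_0$ at $r_0$ are unchanged and the computation of Step~2 still applies. The chief technical obstacle is this construction, namely the simultaneous enforcement of subharmonicity (so that $\Delta u_0=\Delta\eta\ge 0$), infinite-order boundary flatness, and the prescribed interior values. The genuine mathematical content of the proof is the explicit strict positivity $\rho_{tt}(0,c)=c^{-3/2}+O(1/R)>0$ obtained above, which rules out space-time quasiconcavity uniformly in $T$.
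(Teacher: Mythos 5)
Your plan is essentially the same as the paper's: reduce the question to a sign condition on the second derivative of the level-set parametrization at $t=0$, and choose $u_0$ so that this sign is wrong.  You parametrize the level curve as $r=\rho(t,c)$, while the paper uses the inverse graph $t=f(r)$; these are equivalent, and both lead to the same second-order condition via implicit differentiation.

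The substantive difference lies in the choice of $u_0$, and here the paper's construction is both cleaner and exactly what is needed to close the gap you explicitly leave open.  You start from $(R+1-r)^2$, observe it fails the infinite-order corner compatibility conditions, and propose to modify $u_0$ near $\{r=R\}\cup\{r=R+1\}$ by a ``standard cutoff-interpolation construction'' producing a subharmonic, boundary-flat, monotone correction $\eta$.  This is not a routine step as stated: the competing requirements (agree with $(R+1-r)^2-\psi$ near $r=R+1/2$, vanish to infinite order at both endpoints, satisfy $\Delta\eta\geq 0$, and keep $u_0'\leq 0$) make a direct construction of $\eta$ awkward, because the Laplacian constraint is second-order and is not preserved under naive cutoffs.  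The paper's device is to prescribe the source $\Delta u_0$ rather than $u_0$: it sets $\Delta u_0 = h(r-R)$ for a bump function $h\geq 0$ that vanishes to infinite order at $0$ and $1$, solves the resulting radial ODE for $u_0$ with the prescribed boundary values, and then uses the uniform bound on $v'_R$ (for $R$ large) to verify monotonicity.  This simultaneously guarantees $\Delta u_0\ge 0$, gives the corner compatibility for free, and — crucially — by taking $h\equiv 1/10$ on $[1/4,3/4]$ makes $u_t(\cdot,0)$ exactly constant near $r=R+1/2$, so that $u_{rt}(R+1/2,0)=u_{tt}(R+1/2,0)=0$ and the formula collapses to $f''(R+1/2)=-10\,u_0''(R+1/2)<0$ with no error terms.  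Your choice of $u_0$ produces $O(1/R)$ contributions from $v_{rt}$ and $v_{tt}$ that you then need to control; the paper's choice eliminates them outright.  Your main computation $\rho_{tt}(0,1/4)=8+O(1/R)$ is correct, and your reduction of space-time quasiconcavity to concavity of $\rho(\cdot,c)$ is correct, so the logical skeleton is right.  But the one step you leave as ``a standard construction'' is where the paper's specific idea (prescribing $\Delta u_0$ as the ODE forcing term rather than prescribing $u_0$) does all the work, and I would not describe filling it in without that idea as routine.
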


 In Theorem \ref{theorem2} any space-time level set $\partial \Omega_c := \{ (x,t) \in \ov{\Omega} \times [0, \infty) \ | \ u(x,t) =c \}$ for $c\in (0, 1)$ will be given by a graph $t=f(|x|)$ where $f(r)$ is a smooth strictly increasing function defined on some interval $[r_0, r_1)$ where  $f(r_0)=0$.  In particular, $f$ is defined implicitly by $u(r, f(r))=c$ and differentiating this and using \eqref{heat} gives  
$$f''(r)=\frac{-1}{u_t}(u_{rr}+2 u_{rt}f' +  u_{tt}(f')^2 )$$
We show that by solving an ordinary differential equation, we may choose the function $u_0$ so that the right hand side above is negative at $(r_0, 0)$, implying $f''(r_0)<0$ and thus $\partial \Omega_c $ is not convex.  The details of this argument are given in Section \ref{section3} where we prove Theorem \ref{theorem2}. 

Finally, in Section \ref{section4} we give a different counterexample to space-time quasiconcavity using a ``two-point function'' as in \cite{W} and inspired by the work of Rosay-Rudin \cite{RR}.  It satisfies properties (i), (ii) of Theorem \ref{theorem2}, but (iii) must be replaced by
\begin{enumerate}
\item[(iii)*] $u(x,t)$ is not space-time quasiconcave on $\overline{\Omega}_0 \times [0,T)$ for {\bf some} $T>0$.
\end{enumerate}
The argument using the two-point function is perhaps slightly more intuitive than that of Theorem \ref{theorem2} and the counterexample is defined on the annulus $\{1 < r < 2\}$.   

\bigskip
\noindent
{\bf Acknowledgements.} \ The authors thank the referee for correcting some inaccuracies in a previous version of this paper.

\pagebreak[3]

\section{A counterexample to quasiconcavity} \label{section2}

Let $\Omega_0$, $\Omega_1$ and $\Omega$ be as in the introduction.  We first gather some well-known facts about solutions to (\ref{heat}).

\begin{proposition}  \label{prop} Let $u_0$ be an admissible function on $\ov{\Omega}$.  Then there exists a unique continuous solution $u(x,t)$ to (\ref{heat}) on $\overline{\Omega} \times [0,\infty)$ which is smooth on $\Omega \times (0,\infty)$ and satisfies the following conditions for $(x,t) \in \Omega \times (0,\infty)$ 
\begin{enumerate}
\item[(i)] $\displaystyle{0 < u(x,t) < 1}$.
\item[(ii)] $\displaystyle{u_t(x,t)=\Delta u(x,t) >0}$.
\item[(iii)] $\displaystyle{x \cdot \nabla u(x,t) <0}$.
\end{enumerate}
Moreover, as $t \rightarrow \infty$, $u(x,t)$ converges smoothly on $\Omega$ to the harmonic function $u_{\infty}$ with boundary conditions $u_{\infty}|_{\partial \Omega_0}=0$ and $u_{\infty}|_{\partial \Omega_1}=1$.
\end{proposition}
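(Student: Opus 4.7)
My plan is to obtain existence, uniqueness, and interior regularity from standard linear parabolic theory, and then to derive (i)--(iii) together with the long-time asymptotics by successive applications of the maximum principle, Hopf's lemma, and parabolic Schauder estimates.

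Since the Dirichlet data is continuous and time-independent and $u_0$ is smooth and agrees with it on $\p\Omega_0\cup\p\Omega_1$ (admissibility), standard results for the linear heat equation on a bounded smooth domain produce a unique continuous solution on $\overline{\Omega}\times[0,\infty)$ that is $C^\infty$ on $\Omega\times(0,\infty)$. For (i), I would first check that $0\le u_0\le 1$ on $\overline{\Omega}$: any ray from the origin meets $\overline{\Omega}$ in a segment with $u_0=1$ at the inner endpoint on $\p\Omega_1$, $u_0=0$ at the outer endpoint on $\p\Omega_0$, and $u_0$ non-increasing along the segment because $x\cdot\nabla u_0\le 0$. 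The strong maximum principle then yields $0<u<1$ on $\Omega\times(0,\infty)$. For (ii), the function $v:=u_t=\Delta u$ solves the heat equation, vanishes on $\p\Omega_0\cup\p\Omega_1$ for all $t>0$ since the boundary data are constant in time, and has initial value $\Delta u_0\ge 0$ not identically zero, so the strong maximum principle gives $v>0$ in the interior.

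For (iii), I would set $w:=x\cdot\nabla u$ and compute directly that
$$(\p_t-\Delta)w=-2\Delta u=-2u_t,$$
making $w$ a strict subsolution of the heat equation by (ii). On the parabolic boundary, $w\le 0$: at $t=0$ by admissibility of $u_0$; on the spatial boundary $u$ is constant so $\nabla u$ is parallel to the outward unit normal, and (i) combined with Hopf's lemma provides a strict sign at each point of $\p\Omega_0$ (where $u$ attains its minimum and the outward normal from $\Omega$ points in the direction $+x/|x|$) and of $\p\Omega_1$ (where $u$ attains its maximum and the outward normal from $\Omega$ points in the direction $-x/|x|$). A would-be interior maximum of $w$ at some $(x_*,t_*)$ with $t_*>0$ would give $w_t\ge 0$ and $\Delta w\le 0$ at that point, contradicting $(\p_t-\Delta)w<0$; hence $w<0$ throughout $\Omega\times(0,\infty)$.

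For the long-time behavior, (ii) implies that $t\mapsto u(x,t)$ is monotone increasing and bounded above by $1$, so $u$ converges pointwise to some limit $u_\infty$ that takes the prescribed boundary values. Uniform interior parabolic Schauder estimates on $\Omega\times(1,\infty)$ then upgrade the monotone pointwise convergence to smooth convergence on compact subsets of $\Omega$; passing to the limit in the heat equation shows that $u_\infty$ is harmonic with the given Dirichlet data, which pins it down by uniqueness. The main obstacle I expect is the bookkeeping for step (iii): correctly identifying the orientation of the outward normal to $\Omega$ on the inner component $\p\Omega_1$ (it points toward the origin, not away from it) before invoking Hopf's lemma, and checking that the boundary and initial contributions are actually compatible on the corner $\p\Omega\times\{0\}$ so that the maximum-principle argument on the parabolic boundary is valid; everything else reduces to routine applications of parabolic maximum principles.
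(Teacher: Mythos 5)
Your proposal is correct in outline and takes the same route the paper does: the paper delegates existence, uniqueness, regularity and long-time behavior to Friedman~\cite{F} and parts (ii)--(iii) to \cite[Lemma 1]{DK}, while you supply the standard maximum-principle arguments behind those references. One imprecision worth flagging in your step (iii): for general convex $\Omega_0,\Omega_1$ with smooth boundary (which is the setting of the proposition, not just balls), the outward unit normal to $\Omega$ on $\partial\Omega_0$ and $\partial\Omega_1$ is \emph{not} $\pm x/|x|$. What is true, and what you actually need, is the star-shapedness consequence of convexity plus the assumption $0\in\Omega_1\subset\Omega_0$: at $p\in\partial\Omega_0$ the outward normal $n$ to $\Omega$ satisfies $p\cdot n>0$, and at $p\in\partial\Omega_1$ it satisfies $p\cdot n<0$. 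Combined with the fact that $\nabla u$ is parallel to $n$ on the level sets $\partial\Omega_0,\partial\Omega_1$ and the Hopf sign on $\partial_n u$ (positive on $\partial\Omega_1$ where $u$ attains its max, negative on $\partial\Omega_0$ where it attains its min), this gives $x\cdot\nabla u<0$ on the lateral boundary, and the strict-subsolution argument then closes as you describe. You also correctly anticipate the corner issue at $\partial\Omega\times\{0\}$: since $u_0$ need not satisfy higher-order compatibility conditions, $u_t$ and $x\cdot\nabla u$ need not extend continuously to the parabolic corner, so the cleanest way to initialize the maximum principle is to note that $u_0$ is a (stationary) subsolution of \eqref{heat} with matching boundary data, giving $u(\cdot,t)\ge u_0$ and hence $u_t\ge 0$ by the semigroup/comparison argument, and then to run your arguments on $\Omega\times[\delta,T]$ and let $\delta\to 0$. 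These are the kinds of details that \cite[Lemma 1]{DK} handles; your sketch captures the essential ideas.
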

\begin{proof}
The existence of a unique solution $u(x,t)$ to (\ref{heat}) with the stated regularity, and the convergence as $t \rightarrow \infty$ are classical, see for example \cite{F}.
From (\ref{ass}) we have $0 \le u_0 \le 1$ and then (i) follows from the strong maximum principle for parabolic equations.  Parts (ii) and (iii) are proved in \cite[Lemma 1]{DK} and are also consequences of the maximum principle.   \end{proof}

We now start the proof of Theorem \ref{theorem1}.  Let $\Omega = \{ 1 < r<2 \}$ be as in the statement of the theorem and we assume for the rest of this section that $n \ge 2$.
  Part (i) of Theorem \ref{theorem1} is a consequence of the above proposition.  For part (ii), we begin by defining two auxiliary functions $V_{\rho}$ and $W$.

\begin{lem}\label{L1} For any $\rho \in (1,3/2]$, define a radially symmetric function $V_{\rho}=V_{\rho}(r)$ on $\ov{\Omega}$ by
$$V_{\rho}(r) = \left\{ \begin{array}{ll} \exp \left( \frac{n}{r-\rho} - \frac{n}{1-\rho} \right),  & 1 \le r <\rho \\ 0, & \rho \le r \le 2. \end{array} \right.$$
Then $V_{\rho}$ is an admissible function. 
\end{lem}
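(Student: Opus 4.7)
The plan is to verify the four defining properties of admissibility for $V_\rho$: smoothness on $\ov{\Omega}$, the boundary values $V_\rho=1$ at $r=1$ and $V_\rho=0$ at $r=2$, the star-shaped condition $x\cdot \nabla V_\rho \le 0$, and subharmonicity $\Delta V_\rho \ge 0$ (not identically zero).

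Smoothness and boundary values come essentially for free. The function $\exp(n/(r-\rho))$ and all its derivatives tend to $0$ as $r \to \rho^-$, so $V_\rho$ glues smoothly to the zero function on $[\rho,2]$; this is the standard bump-function argument. At $r=1$ the normalization constant $-n/(1-\rho)$ in the exponent forces $V_\rho(1) = e^0 = 1$, and since $\rho \le 3/2 < 2$ we have $V_\rho(2) = 0$. For the gradient condition, because $V_\rho$ is radial it suffices to check $V_\rho'(r) \le 0$; on $[1,\rho)$ we compute $V_\rho'(r) = -\frac{n}{(r-\rho)^2}V_\rho(r) \le 0$, and $V_\rho'\equiv 0$ on $[\rho,2]$, so $x\cdot\nabla V_\rho = r V_\rho'(r) \le 0$.

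The main step is subharmonicity. Writing $s=r-\rho$, on $[1,\rho)$ the radial Laplacian works out to
\[
\Delta V_\rho \;=\; V_\rho''(r)+\frac{n-1}{r}V_\rho'(r)\;=\;\frac{nV_\rho(r)}{(r-\rho)^4}\left[\,2(r-\rho)+n-\frac{(n-1)(r-\rho)^2}{r}\,\right],
\]
using $V_\rho''(r)=\bigl(\tfrac{2n}{(r-\rho)^3}+\tfrac{n^2}{(r-\rho)^4}\bigr)V_\rho(r)$. Since $\rho \le 3/2$ we have $|r-\rho| \le 1/2$ on $[1,\rho)$, and since $r\ge 1$ we get
\[
2(r-\rho)+n-\frac{(n-1)(r-\rho)^2}{r}\;\ge\;-1+n-\frac{n-1}{4}\;=\;\frac{3(n-1)}{4}\;\ge\;\frac{3}{4},
\]
for $n \ge 2$. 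Thus the bracket is strictly positive on $[1,\rho)$, giving $\Delta V_\rho > 0$ there, while $\Delta V_\rho \equiv 0$ on $[\rho,2]$. This shows $\Delta V_\rho \ge 0$ on $\ov\Omega$ and is not identically zero.

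There is no real obstacle; the only mildly delicate piece is making the elementary estimate above go through for all $n \ge 2$ and all $\rho \in (1,3/2]$ simultaneously, which is why the range $\rho \le 3/2$ is built into the hypothesis (it bounds $|r-\rho|$ by $1/2$ and keeps the troublesome $(n-1)(r-\rho)^2/r$ term from dominating the constant $n$).
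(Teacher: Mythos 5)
Your proof is correct and follows essentially the same approach as the paper's: the same direct computation of the radial Laplacian, factoring out $nV_\rho/(r-\rho)^4$, and the same elementary bound using $|r-\rho|\le 1/2$ and $1\le r\le 3/2$ on the region where $V_\rho$ is nonzero. You spell out the remaining admissibility checks (boundary values, monotonicity, smooth gluing) a bit more explicitly than the paper, which simply notes them, but the substance is identical.
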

\begin{proof}

We drop the $\rho$ subscript.  Observe that $V(r) \ge 0$ is smooth, decreasing on $[1,2]$,  satisfies $V(1)=1$ and
\begin{equation} \label{DeltaU}
\begin{split}
(\Delta V)(r)= {} & V''(r) + \frac{(n-1) V'(r)}{r} \\ = {} & n V(r) \left( \frac{2r(r-\rho) + nr - (n-1)(r-\rho)^2}{r(r-\rho)^4} \right) \\  \ge {} &  \frac{n}{4r(r-\rho)^4} V(r) \ge 0
\end{split}
\end{equation}
where the second-to-last inequality follows from the inequalities $|r-\rho| \le 1/2$ and $1 \le r \le 3/2$ when $r <\rho$.
\end{proof}

We now use $V_{\rho}$ to define a non-radially symmetric function $W$.

\pagebreak[3]
\begin{lem}\label{L2}  Fix $r_0, r_1$  with $1< r_0 <r_1\leq 3/2$.  There exists an admissible function $W$ on $\ov{\Omega}$ with the following properties.
\begin{enumerate}
\item[(i)] $W$ is radially symmetric on $1\le r \le r_0$.
\item[(ii)] There exists a smooth strictly decreasing function $b$ on $[r_0,r_1]$ with $b(r_0)=1$ and $b(r_1)  \in (0,1)$ such that if we define $E_R$ to be the ellipsoid with equation
\begin{equation} \label{ER}
b(R)^2  x_1^2 +x_2^2 + \cdots + x_n^2 = R^2, \quad \textrm{for } R \in [r_0, r_1],
\end{equation}
then the level sets $\{ W = c \}$ for $0 < c < W(r_0)$ are the non-spherical ellipsoids $E_R$ for $R \in (r_0, r_1)$.
\item[(iii)] $W$ vanishes outside the ellipsoid $E_{r_1}$.
\end{enumerate}
\end{lem}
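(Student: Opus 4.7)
The plan is to construct $W$ by deforming the spherical level sets of the admissible radial function $V_{r_1}$ from Lemma~\ref{L1} into ellipsoidal ones. First I would fix a small parameter $\ve > 0$ and a smooth profile $b$ on $[r_0, r_1]$ satisfying $b(r_0)=1$, $b$ strictly decreasing, and $b(r_1) \in (r_1/2, 1)$; a convenient choice is $b(R) = 1 - \ve\eta(R - r_0)$ with $\eta \in C^\infty(\mathbb{R})$ vanishing to infinite order at $0$ and strictly increasing on $(0, r_1 - r_0]$ (e.g.\ $\eta(s) = e^{-1/s}$ for $s > 0$), which glues $C^\infty$ to the constant $1$ across $r_0$, and I extend $b$ smoothly and boundedly to $[1,2]$. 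Next I would define $R = R(x)$ implicitly on $\overline{\Omega}$ by the ellipsoid equation \eqref{ER}; since
\begin{equation*}
\partial_R \bigl( b(R)^2 x_1^2 + x_2^2 + \cdots + x_n^2 - R^2 \bigr) = 2 b b' x_1^2 - 2R < 0,
\end{equation*}
the implicit function theorem combined with a strict-monotonicity argument produces a unique smooth solution $R \in C^\infty(\overline{\Omega})$, with $R(x) = r$ on $\{r \le r_0\}$. Finally I would set $W(x) := V_{r_1}(R(x))$; this is $C^\infty$ on $\overline{\Omega}$ because $V_{r_1}$ vanishes to infinite order at $r_1$.

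Properties (i)--(iii) are then immediate. For $r \le r_0$, $W = V_{r_1}(r)$ is radial. For $c \in (0, V_{r_1}(r_0))$, strict monotonicity of $V_{r_1}$ on $[1, r_1]$ gives $\{W = c\} = \{R = V_{r_1}^{-1}(c)\} = E_{R_c}$ with $R_c \in (r_0, r_1)$, a non-spherical ellipsoid; and outside $E_{r_1}$ one has $R(x) > r_1$, forcing $W(x) = 0$. For admissibility: $W|_{\partial \Omega_1} = V_{r_1}(1) = 1$, and the bound $r_1/b(r_1) < 2$ places $E_{r_1}$ strictly inside $\Omega_0$, so $W|_{\partial \Omega_0} = 0$. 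The inequality $x \cdot \nabla W \le 0$ follows from implicit differentiation: with $D := R - b b' x_1^2 > 0$, one obtains $R_{x_1} = b^2 x_1/D$ and $R_{x_j} = x_j/D$ for $j \ge 2$, hence $x \cdot \nabla R = R^2/D > 0$; combining with $V_{r_1}'(R) \le 0$ closes the estimate.

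The main obstacle is the subharmonicity $\Delta W \ge 0$. On $\{r \le r_0\}$ this is exactly Lemma~\ref{L1}, and outside $E_{r_1}$ both sides vanish identically (using that $V_{r_1}$ vanishes to infinite order at $r_1$). On the ellipsoidal layer $\{r_0 < R < r_1\}$, the chain rule and a rearrangement yield
\begin{equation*}
\Delta W = \Delta V_{r_1}(R) + \bigl(|\nabla R|^2 - 1\bigr) V_{r_1}''(R) + \bigl(\Delta R - (n-1)/R\bigr) V_{r_1}'(R),
\end{equation*}
displaying $\Delta W$ as a perturbation of $\Delta V_{r_1}(R)$, which Lemma~\ref{L1} bounds below by $\frac{n V_{r_1}(R)}{4 R (R - r_1)^4}$. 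The key point is that by smooth dependence of $R(x, \ve)$ on $\ve$ and the fact that $R(x, 0) = r$, the factors $|\nabla R|^2 - 1$ and $\Delta R - (n-1)/R$ are $O(\ve)$ uniformly on the compact set $\overline{\Omega}$; together with the elementary bound $|V_{r_1}'(R)| + |V_{r_1}''(R)| \le C V_{r_1}(R)/(R - r_1)^4$, the perturbation is $O\bigl(\ve V_{r_1}(R)/(R-r_1)^4\bigr)$, which is strictly dominated by the leading term once $\ve$ is chosen small enough. Fixing such an $\ve$ yields $\Delta W \ge 0$ globally, with strict inequality on $\{1 \le r < r_0\}$, so $\Delta W \not\equiv 0$ and $W$ is admissible.
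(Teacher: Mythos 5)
Your proof is correct and ultimately constructs the same function $W$ as the paper's, but via a genuinely different technical route. The paper introduces the explicit diffeomorphism $\Psi(x) = (x_1/b(|x|), x_2, \ldots, x_n)$, which carries the sphere of radius $R$ onto the ellipsoid $E_R$, defines $W = V_{r_1} \circ \Psi^{-1}$, and checks $\Delta W \ge 0$ through the Cartesian chain rule, $\Delta W = \Delta V + \sum_{i,j} c_{ij} V_{x_i x_j} + \sum_i c_i V_{x_i}$ with $c_{ij}, c_i \to 0$ as $\kappa \to 0$, together with explicit bounds on $V_{x_i}$ and $V_{x_i x_j}$ by multiples of $V(r)/(r-r_1)^4$. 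You instead define the ellipsoidal radius $R(x)$ implicitly and set $W = V_{r_1} \circ R$, so the Laplacian computation collapses to the one-variable identity $\Delta W = V''(R)|\nabla R|^2 + V'(R)\Delta R$, which you compare to $\Delta V_{r_1}(R) = V''(R) + (n-1)V'(R)/R$ and show the difference is $O(\ve)$ uniformly. The perturbative heart is identical in both arguments---the deformation is uniformly small and $\Delta V_{r_1} \ge \frac{n}{4r(r-r_1)^4}V_{r_1}$ absorbs the error---but your formulation buys a cleaner estimate, since it only involves the radial derivatives $V', V''$ rather than the full Hessian of $V$ and the Jacobian and Hessian of $\Psi^{-1}$. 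The minor cost is that you must define $R(x)$ on all of $\overline\Omega$, so the extension of $b$ past $r_1$ needs to be pinned down (a constant extension works, since then $bb'x_1^2 - R < 0$ trivially where it matters and $V_{r_1}\equiv 0$ there anyway); you gesture at ``extend $b$ smoothly and boundedly'' but leave this slightly loose. Your remaining verifications---$x \cdot \nabla R = R^2/D > 0$, the containment $E_{r_1} \subset B_2$ from $b(r_1) > r_1/2$, and the bound $|V'| + |V''| \le C V/(R-r_1)^4$ (valid since $|R - r_1| \le 1/2 < 1$)---are correct.
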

\begin{proof}  Let $V=V_{r_1}$ be as in Lemma \ref{L1} defined with  $\rho=r_1$.  Regarding $V$ as a function of $x_1, \ldots, x_n$ we compute for any $i, j=1,...,n$:
$$V_{x_i} =  -V(r) \frac{nx_i}{r(r-r_1)^2}$$
and
$$V_{x_ix_j} = V(r) \left( \frac{n^2x_i x_j}{r^2(r-r_1)^4} + \frac{2nx_i x_j}{r^2(r-r_1)^3} - \frac{n(r^2\delta_{ij} -x_i x_j)}{r^3(r-r_1)^2}  \right).$$ 
It follows from this and (\ref{DeltaU}) that for some constant $\beta=\beta(n)$,
\begin{equation}\label{eeeee1} \Delta V + \sum_{i,j} c_{i,j} V_{x_ix_j}+ \sum_i c_i V_{x_i}  \geq 0\end{equation}
as long as $|c_{i, j}|, |c_i| \le \beta$.

Fix a smooth non-decreasing function $a(r):[1,2] \to [0,1]$ with $a(1)=0$ and $a(2)=1$ and with 
$\{ r \ | \ a'(r) >0 \}= (r_0, r_1)$.   Let $b(r)=1- \kappa a(r)$ for some constant $\kappa\in (0,1)$ to be determined.  Treating $b$ as a rotationally symmetric function of $x_1, \ldots, x_n$ consider the map $(y_1, \ldots, y_n) = (x_1 /b, x_2, \ldots,  x_n)$, which we will write as $y=\Psi(x)$.  Note that $\Psi$ is invertible and $\Psi(x) \to \textrm{Id}$ as $\kappa \to 0$ where the convergence is uniform in any $C^k$ norm on $\overline{\Omega}$.  It follows from the inverse function theorem that we likewise have $\Psi^{-1}(y)|_{\ov{\Omega}} \to \textrm{Id}$ uniformly in any $C^k$ norm on $\overline{\Omega}$ as $\kappa \to 0$.

The map $\Psi(x)$  is the identity on  $\{ 1 \le r \le r_0 \}$ and 
takes the sphere $x_1^2+ \cdots + x_n^2=R^2$ to the ellipsoid $b(R)^2 y_1^2 + y_{2}^2 + \cdots + y_n^2=R^2$ which is non-spherical exactly when $R\in (r_0, 2]$.  
 We choose $\kappa$ sufficiently small so that the ellipsoid $(1-\kappa)^2 y_1^2+ y_2^2 +  \cdots +  y_n^2=(r_1)^2$ is contained inside the sphere of radius 2.

Define $W: \ov{\Omega} \rightarrow \mathbb{R}$ by  $W(y)=V(x(y))$ for $x(y)= \Psi^{-1}(y)$.  Note that $W=0$ outside the ellipsoid $(1-\kappa)^2 y_1^2+ y_2^2 + \cdots +  y_n^2=(r_1)^2$.   Thus $W(y)$ satisfies  (i), (ii) and (iii) of the Lemma.

To see that $y \cdot (\nabla W)(y) \le 0$ we compute, 
$$y \cdot (\nabla W)(y) = \sum_{i,j} y_i V_{x_j}(x(y)) \frac{\partial x_j}{\partial y_i}= - (1+E)\frac{nr}{(r-r_1)^2}(x(y)) V(x(y)) $$
where $E$ is an ``error'' term which converges uniformly to zero as $\kappa$ tends to zero.  Hence $y \cdot \nabla W \le 0$ for $\kappa$ sufficiently small.

 All that remains is to show now is that $\Delta W \geq 0$ in $\Omega$.  We compute
 \[
 \begin{split}
 (\Delta W)(y) = {} & \sum_{i,j,k} V_{x_i x_j}(x(y)) \frac{\partial x_i}{\partial y_k} \frac{\partial x_j}{\partial y_k}+ \sum_{i,k} V_{x_i}(x(y)) \frac{\partial^2 x_i}{\partial y_k^2} \\
 = {} & (\Delta V)(x(y)) + \sum_{i,j} c_{i,j} V_{x_ix_j}(x(y))+ \sum_{i} c_i V_{x_i}(x(y)), 
 \end{split}
 \]
 for $c_{i,j}$ and $c_i$ which converge uniformly to zero as $\kappa$ tends to zero.  From \eqref{eeeee1} it follows that $\Delta W \ge 0$ for $\kappa$ sufficiently small.
\end{proof}

Next we have an elementary lemma about radially symmetric subharmonic functions.

\begin{lem} \label{lemmael}
Let $f=f(r)$ be a smooth radially symmetric function on $\Omega$ with $f(1)=1$, $f(2)=0$, $f_r \le 0$ and $\Delta f \ge 0$.  Fix $r_0, r_1$ with $1<r_0<r_1<2$.  Then 
$$f(r_1) \le (1-\sigma) f(r_0),$$
for $\sigma = (r_1-r_0)/(2  +r_1-2r_0)>0$.
\end{lem}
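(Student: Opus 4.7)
The plan is to reduce the inequality to an elementary one-dimensional ODE comparison, exploiting the fact that for a radially symmetric function the condition $\Delta f \ge 0$ is equivalent to $(r^{n-1} f')' \ge 0$, i.e.\ the weighted derivative $r \mapsto r^{n-1} f'(r)$ is non-decreasing on $[1,2]$. Combining this monotonicity with two applications of the mean value theorem and the boundary value $f(2)=0$ will directly relate $f(r_1)$ and $f(r_0)$.

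First, I would apply the mean value theorem on $[r_0,r_1]$ and on $[r_1,2]$ to produce points $\xi \in (r_0,r_1)$ and $\eta \in (r_1,2)$ with
\[
-f'(\xi) \;=\; \frac{f(r_0)-f(r_1)}{r_1-r_0}, \qquad -f'(\eta) \;=\; \frac{f(r_1)}{2-r_1}.
\]
Both quantities are nonnegative since $f_r \le 0$ and $f(2)=0$ together imply $f \ge 0$ on $\Omega$. Next, since $\xi<\eta$, $n\ge 2$, and $f' \le 0$, the monotonicity of $r^{n-1} f'$ gives
\[
-f'(\xi) \;\ge\; (\eta/\xi)^{n-1}(-f'(\eta)) \;\ge\; -f'(\eta).
\]
Substituting yields $(2-r_1)(f(r_0)-f(r_1)) \ge (r_1-r_0)f(r_1)$, which rearranges to the clean bound
\[
f(r_1) \;\le\; \frac{2-r_1}{2-r_0}\, f(r_0).
\]

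To match the form stated in the lemma, I would finally verify that $\frac{2-r_1}{2-r_0} \le 1-\sigma = \frac{2-r_0}{2+r_1-2r_0}$. Clearing denominators (both positive since $r_0,r_1 \in (1,2)$) this is equivalent to $(2-r_1)(2+r_1-2r_0) \le (2-r_0)^2$, and expanding both sides reduces it to $(r_1-r_0)^2 \ge 0$, which is immediate.

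There is no real obstacle here; the entire argument is ODE-level and essentially one-dimensional. The main observation is simply that $r^{n-1} f'$ being monotone lets one play off the average slope of $f$ on $[r_0,r_1]$ against its average slope on $[r_1,2]$. It is worth noting that the stronger intermediate inequality $f(r_1) \le \frac{2-r_1}{2-r_0}f(r_0)$ is what naturally drops out, and the stated $(1-\sigma)$ form is a convenient (strict) relaxation of it.
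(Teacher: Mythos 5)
Your proof is correct, and it takes a genuinely different (and cleaner) route than the paper's. Both arguments hinge on the same observation that $\Delta f \ge 0$ makes $r^{n-1}(-f_r)$ nonincreasing, but they exploit it differently. The paper first proves the pointwise differential inequality $-f_r(s) \ge f(s)/(2-r_0)$ on $[r_0,r_1]$ by comparing $(2-s)s^{n-1}(-f_r(s))$ with $\int_s^2 r^{n-1}(-f_r)\,dr$, then integrates that inequality over $[r_0,r_1]$ and uses monotonicity of $f$ to replace $\int_{r_0}^{r_1} f$ by $(r_1-r_0)f(r_1)$; this chain of estimates produces exactly the constant $(1-\sigma)=\tfrac{2-r_0}{2+r_1-2r_0}$. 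You instead apply the mean value theorem on the two intervals $[r_0,r_1]$ and $[r_1,2]$ and compare the resulting average slopes directly through the monotone weight $r^{n-1}$. This avoids the integral manipulations altogether and drops out the strictly stronger bound $f(r_1)\le\tfrac{2-r_1}{2-r_0}f(r_0)$, from which the stated $(1-\sigma)$ form follows by the elementary inequality $(2-r_1)(2+r_1-2r_0)\le(2-r_0)^2$, i.e. $(r_1-r_0)^2\ge 0$. So your argument is both more elementary and quantitatively sharper; the paper's version has the minor advantage that the intermediate pointwise inequality $-f_r \ge f/(2-r_0)$ is a reusable fact in its own right, but either proof suffices for the application in Theorem \ref{theorem1}. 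One small remark: your step $(\eta/\xi)^{n-1}\ge 1$ only needs $n\ge 1$, so there is no need to invoke $n\ge 2$; the lemma as stated is dimension-free.
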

\begin{proof}
We begin by showing 
\begin{equation} \label{fr}
-f_r \ge \frac{1}{2-r_0} f, \quad \textrm{ on } [r_0, r_1].
\end{equation} Indeed the condition $\Delta f \ge 0$ implies that $r^{n-1}(- f_r(r))$ is nonincreasing in $r$.  Hence for $s \in [r_0, r_1]$ we have
$$(2-s) s^{n-1} (-f_r(s)) \ge \int_s^{2} r^{n-1}(- f_r(r)) dr \ge s^{n-1} \int_s^2 (-f_r(r))dr = s^{n-1} f(s),$$
where for the final equality we used $f(2)=0$, and (\ref{fr}) follows.

Next compute
$$f(r_0) - f(r_1) = \int_{r_0}^{r_1} (-f_r(r))dr \ge \frac{1}{2-r_0} \int_{r_0}^{r_1} f(r) dr \ge \frac{(r_1-r_0)}{2-r_0}  f(r_1),$$
where we recall for the last inequality that $f$ is decreasing in $r$. 
The result follows.
\end{proof}

We can now complete the proof of Theorem \ref{theorem1}.

\begin{proof}[Proof of Theorem \ref{theorem1}]
Let $V=V_{5/4}$ be as in Lemma \ref{L1}.  Let $W$ be as in Lemma \ref{L2} with $r_0=5/4$ and $r_1=3/2$.  We will write $v(t)$ and $w(t)$ for the solutions to (\ref{heat}) with initial conditions $V$ and $W$ respectively. 
For $\ve \in (0,1)$ to be determined, define
$$u_0 = (1-\ve) V + \ve W,$$
so that $u(t) = (1-\ve) v(t) + \ve w(t)$ is the solution of (\ref{heat}) starting at $u_0$.  Clearly $u_0$ is an admissible function.  In addition, note that for $\{ 1 < r \le 5/4 \}$ the level sets of $u_0$ are spheres and, since $V$ vanishes for $r>5/4$, the level sets for $u_0$ on $\{ r >5/4 \}$ are the same as those of $W$.  In particular, the level sets of $u_0$ are convex and so $u_0$ is quasiconcave.

Let $\eta_1, \eta_2 \in (0,1/4)$ be small constants to be determined and write $R^- = 3/2 - \eta_1$.  Then on the non-spherical ellipsoid $E_{R^-}$ with equation given by (\ref{ER}),  $W$ takes a constant value, $W=s>0$, say, which depends on $\eta_1$.  We have $W=0$ on and outside the ellipsoid $E_{3/2}$.  Define $R^+ = 3/2+\eta_2$ and write 
 $S_{R^+}$ for the sphere of radius $R^+$.  Observe that for $\eta_2$ sufficiently small  we can find $X \in S_{R^+}$ and $Y' \in E_{3/2}$ such that $(X+Y')/2$ lies outside both $S_{R^+}$ and $E_{3/2}$.  Next by choosing $\eta_1$ sufficiently small we can find a point $Y \in E_{R^-}$ close to $Y'$ so that $Z=(X+Y)/2$ lies outside both $S_{R^+}$ and $E_{3/2}$.  See Figure \ref{fig}.   Note that now $\eta_1$ is chosen, $s$ is a fixed positive number.

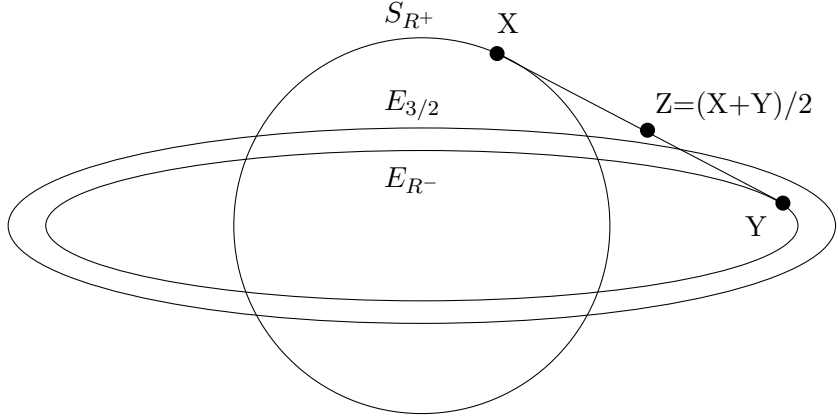
\begin{figure}[h] 

\begin{tikzpicture}

\draw (2,2) ellipse (5cm and 1cm);

\draw (2,2) ellipse (5.5cm and 1.3 cm);

 \node[text width=3cm] at (4.5,4.7) 
    {X};

\draw (2,2) circle (2.5cm);

\draw (3,4.29) -- (6.8,2.3);

\fill[black!40!black] (6.8,2.3) circle (.1cm);

\fill[black!40!black] (5.0,3.27) circle (.1cm);

\node[text width=3cm] at (6.6,3.6) 
    {Z=(X+Y)/2};

\node[text width=3cm] at (7.8,2.0) 
    {Y};

\node[text width=3cm] at (3,4.8) 
    {$S_{R^+}$};

\node[text width=3cm] at (3,3.6) 
    {$E_{3/2}$};

\node[text width=3cm] at (3,2.6) 
    {$E_{R^-}$};

\fill[black!40!black] (3,4.29) circle (.1cm);

\draw (6.8,2.3) circle (.05cm);

\end{tikzpicture}
\caption{Schematic diagram showing the ellipsoids $E_{R^-}$ and $E_{3/2}$, the sphere $S_{R^+}$ and the points $X,Y,Z$.}
\label{fig}
\end{figure}

Since $V(X)=W(Z)=0$, and using Proposition \ref{prop}, there exist continuous functions $\alpha(t), \beta(t)$ which vanish at $t=0$ and are positive for $t>0$ such that
  $$v(X,t) = s \alpha(t),  \quad w(Z,t) = \beta(t).$$
 Next we use Lemma \ref{lemmael} to see
  that there exists a constant $\sigma>0$ independent of $t$ such that
 $$v(Z,t) \le s (1-\sigma) \alpha(t).$$
 Indeed, the radially symmetric function $v(\cdot, t)$ satisfies the conditions of Lemma \ref{lemmael} and $Z$ lies at a fixed distance outside the sphere $S_{R^+}$ which contains $X$.

Next note that since $W(Y) =s$ and $w_t \ge 0$, we have $$w(Y,t) \ge s$$ for all $t\ge 0$.
Choose a small time $t_0>0$ such that 
$$\ve:= \alpha(t_0) <1/2, \quad \textrm{and} \ \beta(t_0) < \frac{\sigma s}{2}.$$
Compute that 
$$u(Z, t_0) \le (1-\ve) s (1-\sigma) \ve  +  \frac{\ve \sigma s}{2}   < (1-\ve) s \ve.$$
But 
$$u(X, t_0) \ge (1-\ve) s \ve, \quad u(Y, t_0) \ge \ve s > (1-\ve ) s \ve.$$
Hence $X$ and $Y$ lie in the superlevel set $\{ P \ | \ u(P, t_0) \ge (1-\ve) s \ve \}$ but $Z=(X+Y)/2$ does not, showing that this superlevel set is not convex.
\end{proof}

\section{A radially symmetric example} \label{section3}

In this section we give the proof of Theorem \ref{theorem2}.   First, define a smooth function $h:[0, 1]\to \mathbb{R}$ to have the following properties
\begin{enumerate}
\item[(a)] $1/10 \ge h(r) >0$ for $r \in (0,1)$.
\item[(b)] $h(r) = 1/10$ for
 $r \in [1/4, 3/4]$.
\item[(c)] $h^{(k)}(0)=0=h^{(k)}(1)$ for all $k=0,1,2,\ldots$.
\end{enumerate}
Next, for a constant $R>1$, define $v_R : [0,1] \rightarrow \mathbb{R}$ to be the solution of the Dirichlet problem
 \begin{equation} \label{dpv}
  v_R''(r)+(n-1) \frac{v'_R(r)}{r+R} =h(r), \  
  0<r<1, \quad v_R(0)=1, \ v_R(1)=0.
  \end{equation}  
We will determine the constant $R$ later.
By standard elliptic estimates \cite{GT} we have that $v_{R}(r)$ and its derivatives  are bounded on $[0, 1]$ uniformly with respect to $R>1$.  

\begin{remark}
In fact in what follows we only need that $|v'_R(r)| \le C$ for a constant $C$ independent of $R$.  
In this case we can actually write down the solution $v_R$ explicitly and prove this directly.  For example, when $n=2$, $v_R(r)$ is given by
$$v_R(r) = \int_0^r \frac{1}{x+R} \left( \int_0^x (y+R) h(y) dy \right) dx + c_1 \log (r+R) + c_2$$
for constants $c_1$ and $c_2$ given by
$$c_1 = \frac{- 1 - \int_0^1 \frac{1}{x+R} \left( \int_0^x (y+R) h(y) dy \right) dx}{\log ((1+R)/R)}, \quad c_2 = 1- c_1 \log R.$$
To see that $|v_R'(r)| \le C$ for a 
constant $C$ independent of $R$, note that as $R \rightarrow \infty$ the term $$ \left| \frac{c_1}{r+R} \right| = \textrm{O}\left( \frac{1/R}{\log (1+\frac{1}{R})} \right) $$ remains bounded.  
\end{remark}

\bigskip

We can now start the proof of Theorem \ref{theorem2}.

\begin{proof}[Proof of Theorem \ref{theorem2}]
Define our radially symmetric function $u_0$ on $\overline{\Omega}$ by
 $$u_{0}(r) = v_R (r-R), \quad \textrm{for } R \le r \le R+1,$$
 which satisfies the boundary conditions $u_{0}(R) = 1$, $u_{0}(R+1)=0$.  Moreover, from (\ref{dpv}) we have $$\Delta u_{0} (r) = h(r-R), \quad \textrm{for } R \le r \le R+1.$$ Now notice that by the definition of $h$ we have for $r=R$ or $r=R+1$,
$$\Delta^k u_0 (r)= 0, \quad \textrm{ for every} \ k \geq 1.$$ 
Let $u(r,t)$ be 
the rotationally symmetric solution to (\ref{heat}) on $\Omega$ with initial condition $u_0$.  
Then from \cite[Theorem 5.2]{LSU} (or \cite[Theorem 10.4.1]{Kr}) it follows that the function $u(r,t)$ extends to a smooth function on $\overline{\Omega} \times [0,\infty)$.

Next we show that for sufficiently large $R$, the function $u_0$ satisfies the hypotheses of Proposition \ref{prop}.

\pagebreak[3]
\begin{lemma}\label{p1} For sufficiently large $R$ we have 
\begin{enumerate}
\item [(i)] $u_{0}(r)$ is strictly decreasing in $r$.
\item [(ii)] $u_{0}''(R+1/2) \ge 1/10$. 
\item[(iii)] $\Delta u_{0} >0$ for $R<r<R+1$.
\end{enumerate}
\end{lemma}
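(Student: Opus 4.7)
The substitution $u_0(r) = v_R(r-R)$ reduces each claim to a statement about $v_R$ and its derivatives on $[0,1]$. Part (iii) is essentially built into the construction: since the radial Laplacian has denominator $r$ and the ODE \eqref{dpv} has denominator $r'+R$, evaluating at $r' = r-R$ gives $\Delta u_0(r) = v_R''(r-R) + \frac{n-1}{r} v_R'(r-R) = h(r-R)$, which is strictly positive on $(R, R+1)$ by property (a) of $h$. This holds for any $R>1$ and uses no smallness.

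For part (i), my approach is to take $R \to \infty$. Rewriting \eqref{dpv} in integrating-factor form
\[
\bigl[(r+R)^{n-1} v_R'(r)\bigr]' = (r+R)^{n-1} h(r),
\]
integrating once to express $v_R'(r)$ in terms of $v_R'(0)$, and integrating again using $v_R(1)-v_R(0)=-1$ to solve for $v_R'(0)$, the ratios $(s+R)^{n-1}/(r+R)^{n-1}$ appearing in these integrals tend to $1$ uniformly on $[0,1]$. Hence $v_R \to w$ in $C^2([0,1])$, where $w$ solves $w''=h$ with $w(0)=1$, $w(1)=0$. The explicit formula
\[
w'(r) = -1 - \int_0^1 (1-s)h(s)\,ds + \int_0^r h(s)\,ds
\]
is nondecreasing in $r$, so its maximum is $w'(1) = -1 + \int_0^1 s\,h(s)\,ds \le -1 + 1/20 < 0$, using $0 \le h \le 1/10$. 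Therefore $v_R' \le -9/10$ uniformly on $[0,1]$ for $R$ sufficiently large, which gives (i).

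Part (ii) then falls out of the ODE directly: evaluating \eqref{dpv} at $r=1/2$, using $h(1/2)=1/10$ from property (b), and using $v_R'(1/2)<0$ from (i), we obtain
\[
u_0''(R+\tfrac{1}{2}) \;=\; v_R''(\tfrac{1}{2}) \;=\; \tfrac{1}{10} \,-\, (n-1)\,\frac{v_R'(1/2)}{R+1/2} \;>\; \tfrac{1}{10}.
\]

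The main technical step is justifying the $C^2$ convergence $v_R \to w$ as $R\to\infty$. Since \eqref{dpv} is a linear second-order ODE whose sub-leading coefficient $(n-1)/(r+R)$ is of order $1/R$ on $[0,1]$ with all derivatives also $O(1/R)$, this is a routine perturbation argument; alternatively one can simply insert the integral representation for $v_R'$ and estimate the ratios $(s+R)^{n-1}/(r+R)^{n-1} = 1 + O(1/R)$ by hand, bypassing any black-box ODE theory.
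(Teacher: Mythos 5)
Your proof is correct, and for parts (ii) and (iii) it coincides with the paper's argument: (iii) is immediate from $\Delta u_0(r)=h(r-R)$ and property (a) of $h$, and (ii) follows from the ODE together with the sign of $v_R'$ at $1/2$. For part (i), however, you take a genuinely different route. The paper does not pass to the limit $R\to\infty$; instead it invokes the uniform bound $|v_R'|\le C$ (from elliptic estimates, or the explicit formula in the remark preceding the lemma), uses the ODE to bound $|u_0''|\le 1/10 + (n-1)C/R\le 1/5$ once $R$ is large, applies the Mean Value Theorem to find $r_0$ with $u_0'(r_0)=-1$, and concludes $u_0'\le -4/5$ everywhere on $[R,R+1]$. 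Your approach instead establishes $C^2$-convergence $v_R\to w$ on $[0,1]$ where $w''=h$, $w(0)=1$, $w(1)=0$, and then reads off the sign of $v_R'$ from the explicit formula $w'(1) = -1 + \int_0^1 s\,h(s)\,ds \le -19/20$ together with monotonicity of $w'$. Both are sound. The paper's argument is shorter and leans on the black-box uniform derivative bound; yours is slightly longer but entirely self-contained, derives the uniform bound as a byproduct, and gives a sharper asymptotic description of $v_R$ as $R\to\infty$. One tiny point: your final strict inequality $u_0''(R+1/2)>1/10$ in part (ii) only holds for $n\ge 2$ (for $n=1$ the $(n-1)$ factor vanishes and one has equality), but since the lemma only asserts $\ge 1/10$ this is harmless.
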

\begin{proof}
For  (i), note that for all $r\in [R,R+1]$ we have $|u'_0(r)|\leq C$ for some C independent of  $R$ as observed  above.  Thus for all $r\in [R, R+1]$ we have 
\[ \begin{split}
|u_0''(r)|\leq {} &  | \Delta u_0 (r)|+  (n-1)| u'_0(r)/r| \\  = {} &  | h(r-R)|+  (n-1)| u'_0(r)/r|  \\ \le  {} & 1/10+ (n-1)C/R \leq 1/5
\end{split} \]  as long as $R$ is larger than $10C(n-1)$.  On
 the
  other hand, by the boundary conditions on $u_0$ we have by the Mean Value Theorem that $u_0'(r_0)=-1$ at some point $r_0$ in
 $[R, R+1]$.  Hence  $u_0'(r)\le -1+1/5$ for all $r\in [R, R+1]$, giving (i).

For (ii), we have 
\[ \begin{split}
u_{0}''(R+1/2)=  {} & \Delta u_0 (R+1/2) - (n-1)u_{0}'(R+1/2)/(R+1/2) \\ \geq {} & \Delta u_0 (R+1/2) \\ = {}&  h(1/2)=1/10
\end{split} \]
 where in the second inequality we have used part (i) and in the third inequality we have used the definition of $u_0$ and the property (b) of $h$.

Finally (iii) follows from the definition of $h$.
\end{proof}

We now fix $R$ as in the lemma above.  Define $c=u_0(R+1/2)$ and let $u(x, t)$ be the solution to \eqref{heat}.   We consider the space-time superlevel set $$\Omega_c:=\{(x, t) \in \ov{\Omega} \times [0,\infty): u(x, t) \ge c\} \cup (\Omega_1\times [0,\infty)).$$    
  As noted above, $u(x, t)$ is smooth on $\ov{\Omega} \times [0,\infty)$ while from Lemma \ref{p1} and Proposition \ref{prop}, $u_t(x, t)$ is smooth and strictly positive on $\Omega \times [0,\infty)$.  By the Implicit Function Theorem, we may write $\partial \Omega_c =\{(x, t) \in \ov{\Omega} \times [0,\infty): u(x, t)= c\}$ as the radial graph of the equation $t=f(r)$ where $u(r, f(r))=c$ and $f(r)$ is a smooth increasing function on $[ R+1/2, R+1/2+\ve)$ for some $\ve>0$.  Note that $f(R+1/2)=0$.  Differentiating the defining equation for $f$ we obtain
\begin{equation}
f'(r)=-\frac{u_r}{u_t}
\end{equation}
where the functions above are evaluated at $(r, f(r))$, and
\begin{equation}\begin{split}
f''(r)&=\frac{-1}{u_t}(u_{rr}+2 u_{rt}f' +  u_{tt}(f')^2 )\\
\end{split}\end{equation}
where again these are all evaluated at $(r, f(r))$.  Now evaluating the above at $(r, f(r))=(R+1/2, 0)$, and noting that $u_t(r, 0)=\Delta u_0(r)=h(r-R)=1/10$ in some neighborhood of $r=R+1/2$, hence $u_{tr}(R+1/2, 0)= u_{tt}(R+1/2, 0)=0$, we get 
\begin{equation}\begin{split}
f''(R+1/2)&=-10 u_0''(R+1/2) \le -1 <0 \\
\end{split}\end{equation}
by Lemma \ref{p1}.   This contradicts that $\partial \Omega_c$, which is defined by $t=f(r)$, is convex in $\mathbb{R}^{n+1}$.
\end{proof}

\section{Two-point functions} \label{section4}

In this section we discuss a different way to find a counterexample to space-time quasiconcavity using a two-point function as in \cite{W} (see also \cite{RR}).  We work in dimension $n\ge 1$ with the domain $\Omega =\{ 1 < r <2 \}$.  Let $u(x,t)$ be a smooth function on $\ov{\Omega} \times [0,\infty)$.
Consider the two-point function
$$\mathcal{H}((x,s), (y,t)) = (Du(y,t) - Du(x,s)) \cdot (y-x) + (u_t(y,t) - u_t(x,s))(t-s),$$
restricted to $(x,s), (y,t) \in \ov{\Omega} \times [0, \infty)$ with $u(x,s) = u(y,t)$.
If $u(x,t)$ is space-time quasiconcave then $\mathcal{H} \le 0$.  Indeed, note that for a smooth function $w$ in $\mathbb{R}^{n+1}$, if the superlevel set $\{ w >c \}$ is convex with a smooth boundary that contains two points $X$ and $Y$ then $(Dw(Y) - Dw(X)) \cdot (Y-X) \le 0$ since the vectors $Dw(X), Dw(Y)$ point in the inward normal direction. Applying this to $X=(x,s)$ and $Y=(y,t)$ in $\ov{\Omega} \times (0,\infty)$ with $u(x,s) = u(y,t)$, and using the continuity of $\mathcal{H}$, we have $\mathcal{H} \le 0$ on its domain of definition.

For our counterexample, we construct a radially symmetric admissible function $u_0$ such that the corresponding solution $u(x,t)$ of the  heat equation (\ref{heat}) is smooth on $\ov{\Omega} \times [0,\infty)$ and has 
$\mathcal{H}$  strictly positive somewhere.

Let $\ve>0$ be a small constant to be determined and let $g: [1,2] \rightarrow [0, \infty)$ be a smooth function with the following properties:
\begin{enumerate}
\item[(a)] $g(r) = 1/(2\ve)$ for $1 \le r \le 1+\ve$, and $g(r) =0$ for $2-\ve \le r\le 2$.
\item[(b)] $g$ is decreasing on $[1,2]$.
\item[(c)] $\displaystyle{\int_1^2 r^{1-n}g(r)dr =1}$. 
\end{enumerate}
We then define a smooth function $u_0$ on $[1,2]$ by
$$u_0(r) = - \int_1^r s^{1-n} g(s) ds +1, \quad r \in [1,2].$$
It is straightforward to check that $u_0$ is an admissible function. Moreover, $\Delta u_0$ vanishes identically in a neighborhood of $r=1$ and $r=2$.  Let $u(x,t)$ be the corresponding solution of (\ref{heat}), which by \cite[Theorem 5.2]{LSU} is smooth on $\ov{\Omega} \times [0,\infty)$.  From Proposition \ref{prop} we know that $u(x,t)$ converges smoothly as $t \rightarrow \infty$ to the harmonic function $u_{\infty}$ given by
$$u_{\infty} (r) = \left\{ \begin{array}{ll} 2-r, & \quad n=1 \\ 1- \frac{\log r}{\log 2}, & \quad n=2 \\ \frac{(2/r)^{n-2}-1}{2^{n-2}-1}, & \quad n>2.\end{array} \right.$$

We now choose our points $(x,s)$ and $(y,t)$.  We choose $x$ and $y$ to lie in the line $x_2=\cdots = x_n=0$.  Pick $x= (1+\ve/2, 0, \ldots, 0)$ and $s=0$.  By the definition of $u_0$ we have $u(x, 0) = u_0(1+\ve/2) \approx 3/4$.  
Let $\gamma$ solve $u_{\infty}(1+\gamma) = u(x,0)$, which satisfies $\gamma > c(n)$ for a constant $c(n) \in (0,1)$ depending only on $n$.  Writing $y(t) = (y_1(t), 0, \ldots, 0)$ solving $u(y(t),t) = u(x,0)$ we have $y_1(t) \rightarrow 1+\gamma$  and $u_r(y(t), t) \rightarrow (u_{\infty})_r (1+\gamma)$ as $t\rightarrow \infty$.  Then for $t$ sufficiently large and $\ve>0$ sufficiently small we have $y_1 >1+\ve/2$ and $|u_r(y(t),t)| \le C$ for a uniform $C$.  Writing $y$ for $y(t)$ we have
$$\mathcal{H}((x,s), (y,t)) = (u_r(y,t) - u_r(x,0))  (y_1- (1+\ve/2)) + (u_t(y,t) - u_t(x,0))(t-0).$$
From the definition of $u_0$ we have $u_r(x,0) \approx - 1/(2\ve)$ and $u_t(x, 0) = \Delta u(x,0) =0$.  On the other hand, from Proposition \ref{prop}, $u_t(y,t)>0$.  Hence for $\ve$ sufficiently small, $\mathcal{H}>0$.

\begin{remark}
As in \cite{W} one can show that a maximum principle holds for the quantity $\mathcal{H}$ using a parabolic version of a Lemma of Rosay-Rudin \cite{RR}.  This rules out $\mathcal{H}$ obtaining a positive interior maximum.  However, it does not rule out a positive maximum occuring at a point $((x,s), (y,t))$ with $s=0$ which would be needed to prove that quasiconcavity is preserved for the heat equation.
\end{remark}

\end{document}